\documentclass{amsart}

\usepackage{latexsym,amssymb}
\usepackage[english]{babel}

\usepackage{amsfonts, amsthm}
\usepackage{amsmath}

\newtheorem{prop}{Proposition}[section]

\newtheorem{thm}[prop]{Theorem}

\theoremstyle{definition}

\newtheorem*{KN}{The Knight's Tour Problem}

\newtheorem*{PN}{The Prince's Tour Problem}

\newtheorem*{OP}{Open Problem}

\begin{document}

\title[A solution to  the 7-prince's tour problem]{A solution to  the 7-prince's tour problem}

\author[L. Mella]{Lorenzo Mella}
\address{Dip. di Scienze Fisiche, Informatiche, Matematiche, Universit\`a degli Studi di Modena e Reggio Emilia, Via Campi 213/A, I-41125 Modena, Italy}
\email{lorenzo.mella@unipr.it}

\keywords{Chessboard, Knight's Tour, Prince's Tour}
\subjclass[2020]{05C12, 05C45}

\maketitle

\begin{abstract}
In \cite{CZ} the authors studied the closed tour problem on the $8\times 8$ chessboard of a chess piece, called $k$-prince, leaving open the existence of such a tour when $k=7$. In this note we find a solution to this open case.
\end{abstract}

\section{On the  prince's tour problem on a square chessboard}
Chessboard related problems have always been a widely studied topic in Discrete Mathematics, and often they arise from recreational purposes, see for instance \cite{W}. In particular, one of the most famous problems in this area is the following:
 \begin{KN}
Can a knight make a closed tour on an $8 \times 8$ chessboard visiting each cell exactly once?
\end{KN}
The affirmative solution to this problem has been known for centuries, and from there researchers considered knight's tours over chessboards of different shapes, or tours of different chess pieces. 

In \cite{CZ} the authors, starting from an irregular domination problem over grid graphs, proposed the movement of a \textit{prince} over a chessboard. Given a positive integer $k$, a \textit{$k$-prince} (in short $k$P) can move $k$ squares away from its current position: namely, if the $k$-prince is in the $(i,j)$-th cell of a chessboard, then it can move in the $(u,v)$-th cell if and only if
\[
 |i - u|  + |j - v| = k.
\]
Given two cells $(i,j)$ and $(u,v)$, we say that $(u,v)$ is \textit{attacked by $(i,j)$} if a $k$P can jump directly from $(i,j)$ to $(u,v)$. Note that if $(i,j)$ attacks $(u,v)$, then $(u,v)$ attacks $(i,j)$. Then, a \textit{$k$P's closed tour over an $m\times n$ board} is a sequence $(c_1,c_2,\dotsc, c_{mn}) \in \{[1,m]\times [1,n]\}^{mn}$ such that:
\begin{itemize}
 \item[(1)] $\bigcup_{i=1}^{mn} c_i= \{[1,m]\times [1,n]\}$;
\item[(2)] $c_i$ attacks $c_{i+1}$ for each $i \in [1,mn-1]$;
\item[(3)] $c_{mn}$ attacks $c_1$.
\end{itemize}
If Condition (3) is dropped, then the tour is said to be \textit{open}. In \cite{CZ}, the authors proposed the following problem:
\begin{PN}
Let $k$ be a positive integer. Can a $k$-prince make a closed tour on an $8 \times 8$ chessboard visiting each cell exactly once? 
\end{PN}
Also, they proved that the set of admissible values for $k$ is $\{1,3,5,7\}$, and  remarked that the existence of a $1$P's closed tour is a fairly easy exercise (indeed, it exists for every $2n \times 2n$ chessboard, with $n\geq 1$), while a $3$P's closed tour can be found by noticing that the standard chessboard knight is a $3$P that cannot move purely horizontally or vertically: since a closed tour of a knight exists, then a $3$P's closed tour exists. In Figure 10 of \cite{CZ} the authors directly write a $5$P's closed tour, leaving as an open problem  the existence of a $7$P's closed tour. Here, we show the $7$P's closed tour that we found by considering rotations of a smaller one:
\[
\begin{array}{|c|c|c|c|c|c|c|c|} \hline 
52 & 41 & 48 & 27 & 36 & 1 & 54 & 13 \\ \hline
43 & 46 & 25 & 34 & 7 & 30 & 51 & 56 \\ \hline
64 & 3 & 60 & 15 & 10 & 5 & 40 & 49 \\ \hline
29 & 58 & 55 & 12 & 21 & 18 & 63 & 38 \\ \hline 
6 & 31 & 50 & 53 & 44 & 23 & 26 & 61 \\ \hline
17 & 8 & 37 & 42 & 47 & 28 & 35 & 32 \\ \hline
24 & 19 & 62 & 39 & 2 & 57 & 14 & 11 \\ \hline
45 & 22 & 33 & 4 & 59 & 16 & 9 & 20 \\ \hline 
\end{array}
\]
This result, combined with the ones of \cite{CZ}, gains the following:
\begin{prop} \label{prop:all8}
There exists a $k$-prince's tour on an $8 \times 8$ chessboard if and only if $k$ is odd and not greater than $7$.
\end{prop}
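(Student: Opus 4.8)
The plan is to obtain Proposition~\ref{prop:all8} by combining the classification of admissible exponents from \cite{CZ} with four explicit constructions, one for each admissible value. For the necessity (the ``only if'' direction) there is nothing new to prove: \cite{CZ} already shows that a $k$-prince's closed tour on the $8\times 8$ board forces $k\in\{1,3,5,7\}$, which is exactly the assertion that $k$ is odd and not greater than $7$. Thus the whole problem reduces to exhibiting, for each of $k=1,3,5,7$, a closed tour on the $8\times 8$ board.

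For $k=1$ the construction is elementary and is already recorded in \cite{CZ}: a $1$-prince is a rook confined to unit steps, and on any $2n\times 2n$ board a serpentine path through the rows, closed up along one column, is a closed tour. For $k=3$ one notes that a $3$-prince's move with both coordinate differences nonzero must have $(|i-u|,|j-v|)$ equal to $(1,2)$ or $(2,1)$, i.e.\ it is a chess knight's move; hence any closed knight's tour of the $8\times 8$ board -- a classical object -- is in particular a closed $3$-prince's tour. For $k=5$ the explicit array in Figure~10 of \cite{CZ} is a closed $5$-prince's tour. This leaves only the case $k=7$.

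The single step carrying genuine content is therefore $k=7$, and it reduces to a finite verification on the $8\times 8$ array displayed above. Reading its entries as the visiting order $c_1,\dots,c_{64}$, I would check that for every $i\in[1,63]$ the cells labelled $i$ and $i+1$, and also the cells labelled $64$ and $1$, have coordinates at Manhattan distance exactly $7$; since on an $8\times 8$ board the equation $|i-u|+|j-v|=7$ forces $(|i-u|,|j-v|)$ to be one of $(0,7),(1,6),(2,5),(3,4)$ up to order, each of the $64$ cyclic pairs is immediate to test, and one also confirms that every label in $[1,64]$ occurs exactly once. I expect no real obstacle here beyond bookkeeping; it is worth recording, as an aid both to finding the tour and to checking it, that the array was assembled from rotated copies of a shorter path, so its four quadrants are interchanged by the $90^\circ$ rotation of the board, and the verification essentially only needs to be carried out on one quadrant together with the few ``gluing'' moves that join consecutive rotated copies.
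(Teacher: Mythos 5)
Your proposal is correct and follows essentially the same route as the paper: necessity is quoted from \cite{CZ}, the cases $k=1,3,5$ are handled exactly as there (serpentine tour, knight's tour as a special $3$-prince tour, Figure~10 of \cite{CZ}), and $k=7$ is settled by finite verification of the displayed array, which the paper likewise presents as having been built from rotations of a smaller path. Nothing further is needed.
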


We can moreover prove the following proposition:
\begin{prop} \label{prop:all2468}
For $n \in [1,4]$ there exists  a closed $k$P's tour over a $2n \times 2n$ chessboard for every odd $k \leq 2n-1$.
\end{prop}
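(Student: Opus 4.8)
The plan is to establish the proposition by going through the admissible values of $k$ one at a time, recycling tours that already exist and producing new ones only where this is forced. The value $k=1$ (the only option when $n=1$) requires nothing: a closed $1$P's tour is precisely a Hamiltonian cycle of the $2n\times 2n$ grid graph, which is the easy fact recalled above. The case $n=4$ requires nothing either: there the admissible values are $k\in\{1,3,5,7\}$, and all of them are furnished by Proposition~\ref{prop:all8} (the $7$P's tour being the one displayed above and the $5$P's tour the one in \cite{CZ}). Finally, for $k=3$ with $n=2$ or $n=3$, recall that a closed knight's tour, using only $(\pm1,\pm2)$- and $(\pm2,\pm1)$-steps, is automatically a closed $3$P's tour; since a closed knight's tour on the $6\times6$ board is well known (see, e.g., \cite{W}), the case $(n,k)=(3,3)$ is done. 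Thus the only genuinely new cases are $(n,k)=(2,3)$ and $(n,k)=(3,5)$, which I would settle by exhibiting explicit tours.

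For $(n,k)=(2,3)$ the knight reduction is unavailable, since the $4\times4$ board carries no knight's tour at all, so here I would build a $3$P's tour by hand. The $3$P's move graph on the $4\times4$ board is $4$-regular on $16$ vertices, and a short backtracking search produces, for instance, the closed tour
\[
\begin{array}{|c|c|c|c|} \hline
1 & 8 & 15 & 2 \\ \hline
6 & 13 & 10 & 7 \\ \hline
11 & 16 & 5 & 12 \\ \hline
4 & 9 & 14 & 3 \\ \hline
\end{array}
\]
whose $16$ consecutive attacks, the wrap-around $16\to 1$ included, are then checked one at a time.

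The case $(n,k)=(3,5)$ is the step I expect to be the main obstacle: the $5$P's move graph on the $6\times6$ board is sparse and strongly ``corner-biased'' — each of the four central cells $(3,3),(3,4),(4,3),(4,4)$ attacks only cells lying on the boundary of the board — so an unstructured search tends to run into dead ends. I would get around this by symmetry. The cyclic group of order $4$ generated by the $90^{\circ}$ rotation $\rho(i,j)=(j,\,7-i)$ acts freely on the $36$ cells of the board, partitioning them into $9$ orbits of size $4$, and $\rho$ sends $5$P-attacks to $5$P-attacks. It therefore suffices to pick one cell from each orbit and arrange the nine chosen cells in a sequence $c_1,\dots,c_9$ so that $c_i$ attacks $c_{i+1}$ for $1\le i\le 8$ and $c_9$ attacks $\rho(c_1)$: the concatenation
\[
c_1,\dots,c_9,\ \rho(c_1),\dots,\rho(c_9),\ \rho^2(c_1),\dots,\rho^2(c_9),\ \rho^3(c_1),\dots,\rho^3(c_9)
\]
then runs through all $36$ cells (the nine orbits being disjoint), and each of its consecutive attacks, together with each of the four seams $\rho^{j}(c_9)\to\rho^{j+1}(c_1)$ for $j\in\{0,1,2,3\}$ read modulo $4$, is the image under a power of $\rho$ of one of the attacks guaranteed above; hence it is a closed $5$P's tour. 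This cuts the whole problem down to a nine-cell search, which produces for example the seed path $(2,2),(1,6),(3,3),(6,5),(2,4),(5,6),(4,2),(3,6),(3,1)$ and, after rotating, the closed tour
\[
\begin{array}{|c|c|c|c|c|c|} \hline
29 & 22 & 35 & 18 & 33 & 2 \\ \hline
24 & 1 & 16 & 5 & 10 & 31 \\ \hline
9 & 32 & 3 & 12 & 25 & 8 \\ \hline
26 & 7 & 30 & 21 & 14 & 27 \\ \hline
13 & 28 & 23 & 34 & 19 & 6 \\ \hline
20 & 15 & 36 & 17 & 4 & 11 \\ \hline
\end{array}
\]
which is then verified cell by cell as before. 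Together with the $k=1$ fact, Proposition~\ref{prop:all8}, and the knight's-tour reduction for $k=3$, this covers every $n\in[1,4]$ and every odd $k\le 2n-1$, which is exactly the claim.
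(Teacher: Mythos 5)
Your proposal is correct and follows essentially the same route as the paper: reduce $k=1$ to the folklore Hamiltonian cycle, $n=4$ to Proposition~\ref{prop:all8}, the $6\times 6$ case $k=3$ to the closed knight's tour, and settle $(4\times 4,k=3)$ and $(6\times 6,k=5)$ by explicit tours (I checked both of your displayed tours and they are valid). Your rotational construction of the $5$P's tour is a pleasant extra that merely systematizes how such an example is found --- indeed it mirrors how the author reports finding the $7$P's tour --- but it does not change the structure of the argument.
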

\begin{proof}
As already remarked, there exists a closed $1$P's tour on a  $2n \times 2n$ chessboard for every integer $n\geq 1$. If $n = 2$, then we need to check the existence of a closed $3P$'s  tour over a $4 \times 4$ chessboard (we remark that a closed knight's tour on this chessboard does not exist, see \cite{W}); an example of such a tour is the following:
\[
\begin{array}{|c|c|c|c|} \hline
3 &6& 9& 14 \\ \hline
12&15&4&7\\ \hline 
5&2&13&10 \\ \hline
16&11&8&1 \\ \hline
\end{array}
\]
If $n = 3$, then we need to check the statement for $k =3,5$: since there exists a closed knight's tour then there exists a $3$-prince's closed tour, while an example for $k=5$ is the following one:
\[
\begin{array}{|c|c|c|c|c|c|} \hline
9&12&5&24&29&18 \\ \hline
20&17&34&31&26&21 \\ \hline
15&22&1&10&7&14 \\ \hline
32&25&28&19&4&33 \\ \hline
3&8&13&16&35&2 \\ \hline
36&11&6&23&30&27 \\ \hline
\end{array}
\]
The case $n=4$ follows from Proposition \ref{prop:all8}.
\end{proof}

Moreover, in \cite{K}  the author analized the closed tour problem of generalized knights over chessboards: given a positive integer $k$, the \textit{generalized $k$-knight} is the piece that is allowed to move along either the horizontal or vertical direction by $k-1$ steps, and along the other direction by $1$ step. In particular, it was proven that for every $k\geq 1$ there exists a closed tour of a generalized $k$-knight on a $2n \times 2n$ chessboard for sufficiently large values of $n$. Here, we consider Theorem 3 of \cite{K}, where we slightly modify the statement in the context of princes:
\begin{thm}
Let $n$ be a positive integer, and let $k$ be any odd integer such that $n \geq (k-1)(3k-2)$. Then, there exists a $k$P's tour over a $2n \times 2n $ chessboard.
\end{thm}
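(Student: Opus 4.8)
The plan is to deduce the statement directly from Theorem~3 of \cite{K} by comparing the moves of the two pieces. The key observation is that a generalized $k$-knight is ``weaker'' than a $k$-prince: from a cell $(i,j)$ a generalized $k$-knight may reach the cells $(i\pm(k-1),\,j\pm1)$ and $(i\pm1,\,j\pm(k-1))$, and each such displacement has $L^1$-length $(k-1)+1=k$, hence satisfies $|i-u|+|j-v|=k$ and is therefore a legal $k$P move. In other words, the move set of a generalized $k$-knight is contained in the move set of a $k$-prince.

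First I would make this comparison precise at the level of tours. If $(c_1,\dots,c_{4n^2})$ is a closed generalized $k$-knight's tour on the $2n\times 2n$ board, then it already lists every cell exactly once, so Condition~(1) holds (it does not depend on the piece), and each consecutive pair $c_i,c_{i+1}$ as well as the pair $c_{4n^2},c_1$ differs by a generalized knight move, hence by a $k$P move; so the very same sequence is a closed $k$P's tour. Thus it suffices to exhibit a closed generalized $k$-knight's tour on the $2n\times 2n$ board, and for that I would invoke Theorem~3 of \cite{K}, which provides exactly such a tour whenever $k$ is odd and $n\ge (k-1)(3k-2)$, i.e.\ precisely under the hypothesis of our statement.

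I would also record the parity remark that motivates the restriction to odd $k$: under the standard two-colouring of the chessboard a single generalized knight move changes colour precisely because $(k-1)+1=k$ is odd, so for even $k$ the two colour classes are not joined by any generalized knight move (nor by any $k$P move) and no closed tour can exist; this is why only odd $k$ are relevant, both in \cite{K} and here. I do not expect a genuine obstacle in this argument: essentially all the mathematical content lies in \cite{K}, and the only point requiring (minor) care is to confirm that the modification from ``generalized $k$-knight'' to ``$k$-prince'' is legitimate — that is, that the construction behind Theorem~3 of \cite{K} never uses a move outside a $k$-prince's repertoire. This is guaranteed by the displacement computation above, since that construction uses only generalized knight moves, so the bound $n\ge (k-1)(3k-2)$ is inherited verbatim.
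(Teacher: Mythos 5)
Your proposal is correct and is exactly the argument the paper intends (and leaves implicit): since a generalized $k$-knight's displacement has $L^1$-length $(k-1)+1=k$, its move set is contained in that of a $k$-prince, so the closed tour guaranteed by Theorem~3 of \cite{K} under the hypothesis $n\geq (k-1)(3k-2)$ is verbatim a closed $k$P's tour. The parity remark you add is consistent with the paper's earlier observation that only odd $k$ are admissible, but is not needed for the deduction itself.
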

In view of the results contained in \cite{CZ} and the ones presented in this note, the following question naturally arises:
\begin{OP} 
Does there exist  a closed $k$P's tour over a $2n \times 2n$ chessboard for every odd $k \leq 2n-1$?
\end{OP}

\section*{Acknowledgements}
The author was partially supported by INdAM-GNSAGA.

\end{document}